\theoremstyle{plain} 
\newtheorem{theorem}{Theorem}[section]
\newtheorem{proposition}[theorem]{Proposition}
\newtheorem*{conjecture*}{Conjecture}
\theoremstyle{definition} 
\theoremstyle{definition} 
\newtheorem{remark}[theorem]{Remark}
\newtheorem*{remark*}{Remark}
\numberwithin{equation}{section}
  \renewcommand\section{\@startsection {section}{1}{\z@}%
                                   {-\bigskipamount}%
                                   {\medskipamount}%
                                   {\large\bfseries
                                   \raggedright}}
  \renewcommand\subsection{\@startsection {subsection}{2}{\z@}%
                                   {-\medskipamount}%
                                   {\smallskipamount}%
                                   {\bfseries
                                   \raggedright}}
\renewcommand{\c}{\mathsf{c}}
\newcommand{\fin}{\mathsf{fin}}
\newcommand{\Ca}{\mathsf{Ca}}
\newcommand{\Co}{\mathsf{Co}}
\renewcommand{\gg}{>\kern-2pt>}
\renewcommand{\ll}{<\kern-2pt<}
\renewcommand{\gg}{>\kern-2pt>}
\renewcommand{\ll}{<\kern-2pt<}
\renewcommand{\le}{\leqslant}
\renewcommand{\ge}{\geqslant}
\newcommand{\al}{\alpha}
\newcommand{\si}{\sigma}
\newcommand{\Om}{
X}
\newcommand{\M}{\mathcal{M}}
\newcommand{\LD}{\mathcal{L}\!\mathcal{D}}
\renewcommand{\LD}{\mathcal{L}{\kern -1.9pt}\mathcal{D}}
\renewcommand{\LD}{\mathcal{D}}
\renewcommand{\LD}{\mathcal{L}{\kern -4pt}\mathcal{C}}
\renewcommand{\LD}{\mathcal{R}{\kern -3pt}\mathcal{C}}
\newcommand{\Z}{\mathbb{Z}}
\newcommand{\R}{{\mathbb{R}}}
\newcommand{\tm}{{\tilde{m}}}
\newcommand{\A}{\mathcal{A}}
\renewcommand{\d}{\mathrm{d}}
\newcommand{\vp}{\varepsilon}
\begin{document}


\title{Measure extension by local approximation}


\author{Iosif Pinelis}

\address{Department of Mathematical Sciences\\
Michigan Technological University\\
Hough\-ton, Michigan 49931, USA\\
E-mail: ipinelis@mtu.edu}

\keywords{
Measures, measure extension, rings of sets, algebras of sets, $\si$-algebras of sets}

\subjclass[2010]{Primary 28A12; secondary 60A10}

%

\begin{abstract}
Measurable sets are defined as those locally approximable, in a certain sense, by sets in the given algebra (or ring). A corresponding measure extension theorem is proved. 
It is also shown that a set is locally approximable in the mentioned sense if and only if it is Carath\'eodory-measurable.   
\end{abstract}

\maketitle

\tableofcontents


\section{Introduction, 
summary, 
and discussion
}\label{intro} 

Let $m\colon\A\to[0,\infty]$ be a measure -- that is, a nonnegative $\si$-additive function defined on an algebra $\A$ over a set $\Om$ such that $m(\emptyset)=0$. 

The measure extension problem is to extend the measure $m$ to a measure on a $\si$-algebra containing $\A$. This problem was solved by Carath\'eodory; see e.g.\ \cite{halmos}. The key in his solution was to consider the set 
\begin{equation}\label{eq:M_Ca}
	\M_\Ca:=\{E\subseteq\Om\colon m^*(F)=m^*(F\cap E)+m^*(F\cap E^\c)\ \; \forall F\subseteq\Om\}  
\end{equation}
of all Carath\'eodory-measurable subsets of $\Om$, where, as usual, $m^*$ denotes the outer measure corresponding to $m$, and ${}^\c$ denotes the complement (to $\Om$). 
It is then shown that $\M_\Ca$ is a $\si$-algebra containing $\A$, and the restriction of $m^*$ to $\M_\Ca$ is a measure extending $m$. 

When the measure $m$ is finite, one can also introduce the inner measure $m_*$ by the formula $m_*(E):=m(\Om)-m^*(E^\c)$ for all $E\subseteq\Om$, and then the key condition $m^*(F)=m^*(F\cap E)+m^*(F\cap E^\c)$ in \eqref{eq:M_Ca} can be rewritten in the case $F=\Om$ as $m^*(E)=m_*(E)$. This equality of the outer and inner measures on all Carath\'eodory-measurable subsets of $\Om$ may explain the intuition behind the definition \eqref{eq:M_Ca}. 

Moreover, one can show -- see e.g.\ Theorem~\ref{th:M comp} at the end of this section -- that the condition $\forall F\subseteq\Om$ in \eqref{eq:M_Ca} can be equivalently replaced by $\forall A\in\A_\fin$, where 
\begin{equation}\label{eq:A_fin}
	\A_\fin:=\{A\in\A\colon m(A)<\infty\}.   
\end{equation}
That is, 
\begin{equation}\label{eq:M_Ca,loc}
	\M_\Ca=\bigcap_{A\in\A_\fin}\M_A,
\end{equation}
where 
\begin{equation*}
	\M_A:=\{E\subseteq\Om\colon(m_A)^*(A\cap E)=(m_A)_*(A\cap E)\}, 
\end{equation*}
$m_A$ is the restriction of the measure $m$ to the algebra $\A_A:=\{B\in\A\colon B\subseteq A\}$ over the set $A$, and $(m_A)^*$ and $(m_A)_*$ are the outer and inner measures corresponding to $m_A$. 
This ``localized'' restatement of the definition of $\M_\Ca$ brings it closer to the mentioned intuition of the desired equality of the outer and inner measures of measurable sets. 
  
Another approach to the measure extension problem 
is based on an approximation idea, which may be more immediately intuitive. 
For any subsets $E$ and $F$ of $\Om$, define the ``distance'' between them by the formula  
\begin{equation}\label{eq:d}
	d(E,F):=m^*(E+F), 
\end{equation}
where $E+F$ denotes the symmetric difference between $E$ and $F$. 
The idea is then to define the set of all measurable subsets of $\Om$ as the closure of the algebra $\A$ with respect to the pseudometric $d$. 
This idea was carried out in \cite[Appendix~1]{borovkov} in the case when $m$ is a probability measure. Of course, one can quite similarly do for any finite measure $m$; cf.\ e.g.\ \cite[Theorem~1.5.6]{bogachev}. 

However, without modifications, this approach will not work in general even when the measure $m$ is $\si$-finite. For instance, suppose that $\Om=\R$, $\A$ is the smallest algebra containing all left-open intervals $(a,b]$ in $\R$, and $m$ is the Lebesgue measure on $\A$, so that $m$ is $\si$-finite. Let now $E:=\bigcup_{n\in\Z}(2n,2n+1]$, so that $E$ is in the $\si$-algebra $\si(\A)$ generated by $\A$. Then it is easy to see that $d(E,A)=\infty$ for any $A\in\A$. Moreover, \cite[Example~4.19]{wise-hall} shows that there exist a set $\Om$, an algebra $\A$ over $\Om$, a $\si$-finite measure $\mu$ on $\si(\A)$, and a set $E\in\si(\A)$ such that $\mu(E)<\infty$ but $\mu(E+A)\ge\mu(E)>0$ for all $A\in\A$. 

The approximation idea can be saved, though, by combining it with appropriate localization. 
That is, a measurable set may be only ``locally'' approximable by sets in the algebra $\A$.  
Specifically, for any $A\in\A$ consider the following ``localized'' version of the definition \eqref{eq:d}: 
\begin{equation*}
	d_A(E,F):=m^*\big(A\cap(E+F)\big) 
\end{equation*}
for any subsets $E$ and $F$ of $\Om$. 
Thus, for the ``distance'' $d$ defined by \eqref{eq:d}, we have $d=d_\Om$.

Now recall \eqref{eq:A_fin} and let $\M$ denote the set of all subsets $E$ of $\Om$ such that for each $A\in\A_\fin$ and each real $\vp>0$ there is some $B=B_{E,A,\vp}\in\A$ such that $d_A(E,B)<\vp$:
\begin{equation}\label{eq:M}
	\M:=\{E\subseteq\Om\colon\forall A\in\A_\fin\;\forall\vp>0\;\exists B\in\A\ d_A(E,B)<\vp\}. 
\end{equation}
Note that here we use the ``$m$-finite'' subset $\A_\fin$ of the algebra $\A$ (rather than $\A$ itself); this localization idea is similar to the one that led us to \eqref{eq:M_Ca,loc}. 

\begin{theorem}\label{th:si-alg}
$\M$ is a $\si$-algebra over $\Om$, and $\M\supseteq\A$. 
\end{theorem}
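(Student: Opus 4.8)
The plan is to reduce the (possibly infinite) measure $m$ to finite measures by exactly the localization already present in \eqref{eq:M}, and then to quote the classical finite-measure version of the approximation construction recalled in the introduction: for any finite measure on an algebra, the closure of that algebra in the pseudometric $(S,T)\mapsto(\text{outer measure of }S+T)$ is a $\si$-algebra containing the algebra (this is the content of \cite[Appendix~1]{borovkov} for probability measures; cf.\ \cite[Theorem~1.5.6]{bogachev}).

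First I would fix $A\in\A_\fin$ and record two elementary facts. (i) For every $S\subseteq A$ one has $(m_A)^*(S)=m^*(S)$: a cover of $S$ by members of $\A$ may be intersected with $A$ to yield a cover by members of $\A_A$ of no larger total $m$-value, and conversely any cover by members of $\A_A$ is a cover by members of $\A$. (ii) For all $E,F\subseteq\Om$ one has $A\cap(E+F)=(A\cap E)+(A\cap F)$. Combining (i) and (ii),
\begin{equation*}
	d_A(E,B)=m^*\big(A\cap(E+B)\big)=(m_A)^*\big((A\cap E)+(A\cap B)\big),
\end{equation*}
and as $B$ runs over $\A$ the set $A\cap B$ runs over all of $\A_A$ (because $\A_A\subseteq\A$). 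Hence, writing $\mathcal R_A$ for the closure of $\A_A$ in the pseudometric $(S,T)\mapsto(m_A)^*(S+T)$ on the subsets of $A$, the definition \eqref{eq:M} says precisely that $E\in\M$ if and only if $A\cap E\in\mathcal R_A$ for every $A\in\A_\fin$.

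Now I would apply the quoted result to each \emph{finite} measure $m_A$: it gives that every $\mathcal R_A$ is a $\si$-algebra over the set $A$ containing $\A_A$. The theorem then drops out of this ``pointwise'' description of $\M$. Indeed: for $C\in\A$ and any $A\in\A_\fin$ we have $A\cap C\in\A_A\subseteq\mathcal R_A$, so $\A\subseteq\M$; if $A\cap E\in\mathcal R_A$ then $A\cap E^\c=A\setminus(A\cap E)\in\mathcal R_A$, so $\M$ is closed under complementation; and if $A\cap E_n\in\mathcal R_A$ for every $n$ then $A\cap\bigcup_n E_n=\bigcup_n(A\cap E_n)\in\mathcal R_A$, so $\M$ is closed under countable unions. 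Therefore $\M$ is a $\si$-algebra over $\Om$ with $\M\supseteq\A$.

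All the real work is thus packed into the quoted finite-measure fact, and I expect that to be the main obstacle. Its routine half --- that $\mathcal R_A$ is an \emph{algebra} --- comes from the identities $B^\c+C^\c=B+C$ and $(B\cup B')+(C\cup C')\subseteq(B+C)\cup(B'+C')$ together with monotonicity and subadditivity of the outer measure; the same estimates would also prove directly that $\M$ itself is an algebra. The substantive half is closure under increasing unions $S_n\uparrow S$, i.e.\ propagating approximability from the $S_n$ to $S$ --- equivalently, the statement that the ``tail'' $m^*\big(A\cap(E\setminus E_n)\big)$ tends to $0$. I would treat it as follows: first, every member of $\mathcal R_A$ is Carath\'eodory-measurable for $m_A$ (a $d_A$-limit of $\A_A$-sets has, along a sufficiently sparse subsequence, indicators converging $m_A$-a.e., so it differs from a measurable set by an $m_A$-null set, the Carath\'eodory extension being complete); consequently, for $S_n\uparrow S$ with $S_n\in\mathcal R_A$ one has $d_A(S_n,S_k)=(m_A)^*(S_k)-(m_A)^*(S_n)$ for $k>n$, so $(S_n)$ is $d_A$-Cauchy because $\big((m_A)^*(S_n)\big)_n$ is nondecreasing and bounded (by $m_A(A)<\infty$); being Cauchy in the complete finite measure space it has a $d_A$-limit, which lies in the $d_A$-closed set $\mathcal R_A$ and agrees with $S$ up to an $m_A$-null set, whence $S\in\mathcal R_A$ (as $\mathcal R_A$ contains all $m_A$-null subsets of $A$ and is closed under symmetric difference). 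This is exactly where finiteness of $m_A$ --- that is, the restriction to $A\in\A_\fin$ in \eqref{eq:M} --- is indispensable; without the localization it genuinely fails, as the example with $\Om=\R$ in the introduction already shows.
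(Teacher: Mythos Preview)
Your proof is correct: the identification $E\in\M \iff A\cap E\in\mathcal R_A$ for every $A\in\A_\fin$ is valid, and once the finite-measure closure result is granted, the $\si$-algebra properties of $\M$ follow formally as you describe.

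The route differs substantially from the paper's, though. The paper argues directly and self-containedly: after the easy algebra properties (via Properties~\eqref{ii} and~\eqref{iii} of $d_A$), closure under countable unions is done in three stages. First, $\bigcup_n A_n\in\M$ for every \emph{disjoint} sequence $(A_n)$ in $\A$ --- here Proposition~\ref{prop:1} gives $d_A\big(\bigcup_n A_n,\bigcup_{n\le N}A_n\big)=\sum_{n>N}m(A\cap A_n)$, and this tail vanishes because $\sum_n m(A\cap A_n)\le m(A)<\infty$. Disjointification (Remark~\ref{rem:CU}) then extends this to arbitrary sequences in $\A$. Finally, for general $E_n\in\M$ one picks $B_n\in\A$ with $d_A(E_n,B_n)<\vp/2^n$, uses the previous step to approximate $\bigcup_n B_n$ by some $B\in\A$, and combines via the triangle inequality and Property~\eqref{iii}. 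No Carath\'eodory machinery is invoked anywhere.

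Your reduction buys a clean conceptual split --- localize, then quote the known finite case --- and the globalization is purely formal. The cost is that the ``real work'' is delegated, and your sketch of the delegated result leans on Carath\'eodory measurability to obtain the Cauchy property and identify the limit, which runs counter to the paper's explicit aim of offering an approximation-based \emph{alternative} to Carath\'eodory. Note also that the paper's three-stage argument, specialized to a single $A\in\A_\fin$, already furnishes a Carath\'eodory-free proof of the very finite-measure fact you quote; so unpacking your citation would essentially recover the paper's proof anyway.
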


The necessary proofs are given in Section~\ref{proofs}.


\begin{theorem}\label{th:si-add}
The outer measure $m^*$ is $\si$-additive on the 
$\si$-algebra $\M$. 
\end{theorem}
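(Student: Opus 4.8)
The plan is to reduce $\si$-additivity to finite additivity. Since $m^*$ is an outer measure it is monotone and countably subadditive, so for pairwise disjoint $E_1,E_2,\dots\in\M$ with union $E$ one has $m^*(E)\le\sum_n m^*(E_n)$ for free, while the reverse inequality follows from $m^*(E)\ge m^*(E_1\cup\dots\cup E_N)$ (monotonicity, noting $E_1\cup\dots\cup E_N\in\M$ by Theorem~\ref{th:si-alg}) once $m^*$ is known to be finitely additive on disjoint members of $\M$; and the finite case for $N$ sets follows by induction from $N=2$. So it suffices to show that for disjoint $E,F\in\M$ one has $m^*(E\cup F)=m^*(E)+m^*(F)$, and here only the inequality $m^*(E)+m^*(F)\le m^*(E\cup F)$ requires an argument, the reverse being subadditivity.

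The crux will be the following local estimate: \emph{if $E,F\in\M$ are disjoint and $A\in\A_\fin$, then $m^*(A\cap E)+m^*(A\cap F)\le m(A)$.} To prove it, fix $\vp>0$ and use \eqref{eq:M} to choose $B,C\in\A$ with $d_A(E,B)<\vp$ and $d_A(F,C)<\vp$; replacing $B,C$ by $A\cap B,A\cap C$, which leaves $d_A(E,\cdot)$ and $d_A(F,\cdot)$ unchanged, we may assume $B,C\subseteq A$. Put $E':=A\cap E$ and $F':=A\cap F$. From $E'\subseteq B\cup(E'+B)$, the identity $m^*(E'+B)=m^*(A\cap(E+B))=d_A(E,B)$ (valid since $B\subseteq A$), and $m^*|_{\A}=m$, we get $m^*(E')\le m(B)+\vp$, and likewise $m^*(F')\le m(C)+\vp$. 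Since $E'\cap F'=\emptyset$, every point of $B\cap C$ lies outside $E'$ or outside $F'$, hence in $(E'+B)\cup(F'+C)$, so $m(B\cap C)=m^*(B\cap C)\le d_A(E,B)+d_A(F,C)<2\vp$. As all terms are finite ($B\cup C\subseteq A$, $m(A)<\infty$), combining with $m(B)+m(C)=m(B\cup C)+m(B\cap C)$ and $m(B\cup C)\le m(A)$ gives $m^*(E')+m^*(F')\le m(B)+m(C)+2\vp\le m(A)+4\vp$; letting $\vp\downarrow0$ proves the estimate.

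The last step is to globalize. If $m^*(E\cup F)=\infty$ there is nothing to prove, so assume it is finite and pick, in the standard way, pairwise disjoint $A_1,A_2,\dots\in\A$ with $E\cup F\subseteq\bigcup_n A_n$ and $\sum_n m(A_n)<m^*(E\cup F)+\vp$; each $A_n$ lies in $\A_\fin$. By countable subadditivity $m^*(E)\le\sum_n m^*(A_n\cap E)$ and $m^*(F)\le\sum_n m^*(A_n\cap F)$, so by the estimate above
\[
m^*(E)+m^*(F)\le\sum_n\big(m^*(A_n\cap E)+m^*(A_n\cap F)\big)\le\sum_n m(A_n)<m^*(E\cup F)+\vp,
\]
and $\vp\downarrow0$ finishes it. The main obstacle — and the point where the localized definition \eqref{eq:M} earns its keep — is exactly this passage from local to global: $m^*(E)$ and $m^*(F)$ need not be approximable by single algebra sets, so one must cover $E\cup F$ by finite‑measure algebra sets and argue inside each of them, and one has to be careful to perform the approximations with $B,C\subseteq A$ so that the identity $m(B)+m(C)=m(B\cup C)+m(B\cap C)$ is available with all terms finite. (Alternatively, essentially the same estimate applied to $m^*(G\cap E)+m^*(G\cap E^\c)$ for arbitrary $G\subseteq\Om$ shows $\M\subseteq\M_\Ca$, whence the conclusion follows from Carath\'eodory's theorem; but the direct argument above is self‑contained.)
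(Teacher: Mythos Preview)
Your proof is correct and follows essentially the same route as the paper: reduce to two-set superadditivity, assume $m^*(E\cup F)<\infty$, cover by a disjoint sequence $(A_n)$ in $\A_\fin$ with near-optimal total measure, approximate $E$ and $F$ locally by algebra sets $B,C$, and exploit the identity $m(B)+m(C)=m(B\cup C)+m(B\cap C)$ together with the bound $m(B\cap C)\le d_A(E,B)+d_A(F,C)$ coming from disjointness. The only organizational difference is that you isolate the inequality $m^*(A\cap E)+m^*(A\cap F)\le m(A)$ as a standalone local lemma and then sum over all the $A_n$, whereas the paper works inside a single truncated union $C=\bigcup_{n\le N}A_n$ and handles the tail $\sum_{n>N}m(A_n)$ separately; your packaging is arguably a bit cleaner, but the substance is identical.
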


So, in view of \eqref{eq:m*=m}, the restriction 
\begin{equation*}
	\bar m:=m^*\big|_\M
\end{equation*}
of $m^*$ to $\M$ is a measure that extends $m$ from the algebra $\A$ to the $\si$-algebra $\M$.

\begin{theorem}\label{th:uniq}
If the measure $m$ on $\A$ is $\si$-finite, then the $\si$-additive extension of $m$ to $\M$ is unique. 
\end{theorem}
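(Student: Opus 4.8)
The plan is to bypass the usual Dynkin / monotone-class route and instead use the defining local-approximation property of $\M$ directly, together with one easy domination inequality. The first step is to reduce to the $m$-finite localizations that appear in \eqref{eq:M}. Since $m$ is $\si$-finite, write $\Om=\bigcup_{n\in\N}A_n$ with $A_n\in\A$ and $m(A_n)<\infty$, and set $\Om_n:=A_n\setminus(A_1\cup\dots\cup A_{n-1})\in\A$, so that $\Om=\bigsqcup_{n}\Om_n$ and each $\Om_n\in\A_\fin$. Let $\mu$ be an arbitrary $\si$-additive extension of $m$ to $\M$, and let $E\in\M$. By Theorem~\ref{th:si-alg}, $\Om_n\cap E\in\M$ for every $n$, and, since $\mu$ is a measure on $\M$ and (by Theorem~\ref{th:si-add}) so is $\bar m$, countable additivity gives $\mu(E)=\sum_n\mu(\Om_n\cap E)$ and $\bar m(E)=\sum_n\bar m(\Om_n\cap E)$. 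Hence it suffices to prove
\[
	\mu(A\cap E)=\bar m(A\cap E)\qquad\text{for every }A\in\A_\fin\text{ and every }E\in\M,
\]
and then specialize to $A=\Om_n$.

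For the second step I note that any $\si$-additive extension $\mu$ of $m$ to $\M$ is automatically dominated by $m^*$: if $G\in\M$ and $G\subseteq\bigcup_kA_k$ with all $A_k\in\A$, then countable subadditivity of $\mu$ and $\mu|_\A=m$ yield $\mu(G)\le\sum_k m(A_k)$, and taking the infimum over all such countable covers gives $\mu(G)\le m^*(G)=\bar m(G)$.

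Now fix $A\in\A_\fin$, $E\in\M$, and $\vp>0$, and carry out the approximation. By the definition \eqref{eq:M} of $\M$ there is $B\in\A$ with $d_A(E,B)<\vp$; since $A\cap\big(E+(A\cap B)\big)=A\cap(E+B)$, we may replace $B$ by $A\cap B$ and so assume $B\subseteq A$, whence $B\in\A_\fin$. Put $G:=(A\cap E)+B$. As $B\subseteq A$, a direct check of symmetric differences gives $G=A\cap(E+B)\in\M$, so $m^*(G)=d_A(E,B)<\vp$, and by the domination of the second step also $\mu(G)<\vp$. From the inclusions $A\cap E\subseteq B\cup G$ and $B\subseteq(A\cap E)\cup G$, the identity $\nu(B)=m(B)$ (which holds for $\nu=\bar m$ by \eqref{eq:m*=m} and for $\nu=\mu$ because $\mu$ extends $m$), and the fact that every quantity in sight is at most $m(A)<\infty$, we get
\[
	\bigl|\nu(A\cap E)-m(B)\bigr|\le\nu(G)\le\vp\qquad\text{for }\nu\in\{\bar m,\mu\}.
\]
Therefore $|\mu(A\cap E)-\bar m(A\cap E)|\le2\vp$, and letting $\vp\downarrow0$ proves the displayed reduced claim, hence the theorem.

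I do not anticipate a genuine obstacle; the two points requiring care are the first two steps. The reduction step is precisely where $\si$-finiteness is used, and it is essential: without it the differences $\nu(A\cap E)-m(B)$ need not make sense and the conclusion genuinely fails. The domination $\mu\le m^*$ on all of $\M$ (not merely on $\si(\A)$) is what lets the single approximation error $\vp$ control the deviation of $\mu$, and not only that of $\bar m$; after that the argument is just bookkeeping with symmetric differences.
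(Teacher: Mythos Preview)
Your proof is correct. Both arguments begin the same way—the $\si$-finite decomposition into pieces in $\A_\fin$ and the domination lemma $\mu\le m^*=\bar m$ on $\M$ obtained from countable covers—but then diverge. The paper does \emph{not} invoke the approximation property of $\M$ at all: it simply applies the domination to both $D_n\cap E$ and $D_n\cap E^\c$, adds the two inequalities, and compares with $\mu(D_n)=\bar m(D_n)=m(D_n)<\infty$, which forces each inequality to be an equality. That ``complement trick'' is shorter and, notably, uses nothing about how $\M$ was constructed; it would work verbatim for any $\si$-algebra containing $\A$ on which $m^*$ restricts to a measure. Your route instead exploits the defining local-approximation property of $\M$ to produce a single algebra set $B$ that simultaneously controls both $\mu(A\cap E)$ and $\bar m(A\cap E)$; this is more in the spirit of the paper's theme and yields an explicit quantitative bound $|\mu(A\cap E)-\bar m(A\cap E)|\le 2d_A(E,B)$, at the cost of a few extra lines of symmetric-difference bookkeeping.
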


Of course, the $\si$-finiteness condition in Theorem~\ref{th:uniq} is essential; for instance, see \cite[Example~4.20]{wise-hall}.   

Let us also compare the set $\M$, defined by \eqref{eq:M}, of all sets locally approximable by sets in algebra $\A$ with the set $\M_\Ca$, defined by \eqref{eq:M_Ca}, 
of all sets measurable in the Carath\'eodory sense, as well as with the completion 
\begin{equation*}
	\M_\Co:=\{E\subseteq\Om\colon \exists S\in\si(\A)\ d_\Om(E,S)=0\}
\end{equation*}
of the $\si$-algebra $\si(\A)$ generated by algebra $\A$. 
Let us also consider the following ``$\A_\fin$'' counterparts of the Carath\'eodory set $\M_\Ca$ and the set $\M_\Co$: 
\begin{align*}
	\M_{\Ca,\A_\fin}&:=\{E\subseteq\Om\colon m^*(A)=m^*(A\cap E)+m^*(A\cap E^\c)\ \;\forall A\in\A_\fin\}, \\ 
	\M_{\Co,\A_\fin}&:=\{E\subseteq\Om\colon\forall A\in\A_\fin\ \exists S\in\si(\A)\ d_A(E,S)=0\}.  
\end{align*}

\begin{theorem}\label{th:M comp}
$\M_\Co\subseteq	\M_{\Co,\A_\fin}=\M=\M_\Ca=\M_{\Ca,\A_\fin}$. 
If the measure $m$ on $\A$ is $\si$-finite, then $\M_\Co=	\M_{\Co,\A_\fin}=\M=\M_\Ca=\M_{\Ca,\A_\fin}$.  
\end{theorem}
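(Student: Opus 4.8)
The plan is to establish the chain of equalities and the inclusion in Theorem~\ref{th:M comp} by proving a small number of inclusions and then invoking the localization principle already implicit in the introduction (namely the equivalence between the global Carath\'eodory condition and its $\A_\fin$-localized form). I would organize it as follows. First, I would prove $\M_\Ca=\M_{\Ca,\A_\fin}$ directly: the inclusion $\M_\Ca\subseteq\M_{\Ca,\A_\fin}$ is trivial since $\A_\fin\subseteq\mathcal{P}(\Om)$; for the reverse, given $E\in\M_{\Ca,\A_\fin}$ and an arbitrary $F\subseteq\Om$, I would use the definition of the outer measure $m^*(F)=\inf\{\sum_n m(A_n):F\subseteq\bigcup_n A_n,\ A_n\in\A\}$, split each covering set $A_n$ (which may be replaced by an $m$-finite set when the sum is finite, the only nontrivial case) into its intersections with $E$ and $E^\c$, apply the $\A_\fin$-condition on each piece, and use countable subadditivity of $m^*$ together with the inequality $m^*(F\cap E)+m^*(F\cap E^\c)\le m^*(F)$ being automatic. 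This is the ``localized restatement'' alluded to around \eqref{eq:M_Ca,loc}.

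Next I would prove $\M=\M_\Ca$. The inclusion $\M_\Ca\subseteq\M$ (hence $\supseteq$ in the displayed chain read appropriately) uses that a Carath\'eodory-measurable set $E$ satisfies, for each $A\in\A_\fin$, the equality of outer and inner measures of $A\cap E$ relative to $m_A$; one then picks a cover of $A\cap E$ and a cover of $A\cap E^\c$ by sets in $\A$ whose masses nearly add up to $m(A)$, and the union of the first cover intersected with $A$ yields a set $B\in\A$ with $d_A(E,B)=m^*(A\cap(E+B))$ small. For the reverse inclusion $\M\subseteq\M_\Ca$, given $E\in\M$, $A\in\A_\fin$, and $\vp>0$, pick $B\in\A$ with $m^*(A\cap(E+B))<\vp$; then estimate $m^*(A\cap E)+m^*(A\cap E^\c)$ by comparing $A\cap E$ with $A\cap B$ and $A\cap E^\c$ with $A\cap B^\c$, using $|m^*(X)-m^*(Y)|\le m^*(X+Y)$ and the additivity $m^*(A\cap B)+m^*(A\cap B^\c)=m(A)$ valid because $B\in\A$. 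Letting $\vp\downarrow0$ gives $m^*(A\cap E)+m^*(A\cap E^\c)\le m(A)=m^*(A)$, which is the $\A_\fin$-Carath\'eodory condition, so by the first step $E\in\M_\Ca$.

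Then I would handle the $\M_\Co$ side. The inclusion $\M_\Co\subseteq\M_{\Co,\A_\fin}$ is immediate from $d_A\le d_\Om$. For $\M_{\Co,\A_\fin}=\M$: the inclusion $\M_{\Co,\A_\fin}\subseteq\M$ is clear because if for each $A\in\A_\fin$ there is $S\in\si(\A)$ with $d_A(E,S)=0$, then since $\si(\A)\subseteq\M_\Ca=\M$ (using Theorem~\ref{th:si-alg}, $\A\subseteq\M$ and $\M$ a $\si$-algebra) one approximates $S$ locally by sets in $\A$ and combines; conversely, given $E\in\M$ and $A\in\A_\fin$, for each $n$ choose $B_n\in\A$ with $d_A(E,B_n)<2^{-n}$ and set $S:=\limsup_n B_n\in\si(\A)$, checking via Borel--Cantelli-type estimates with the outer measure that $m^*(A\cap(E+S))=0$. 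Finally, under $\si$-finiteness, to get $\M_\Co=\M$ I would write $\Om=\bigcup_k A_k$ with $A_k\in\A_\fin$ increasing, take for $E\in\M$ the sets $S_k\in\si(\A)$ with $d_{A_k}(E,S_k)=0$ produced by the previous step, and assemble $S:=\bigcup_k (A_k\cap S_k)\in\si(\A)$, verifying $d_\Om(E,S)=m^*(E+S)\le\sum_k m^*(A_k\cap(E+S_k))=0$.

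The main obstacle, I expect, is the bookkeeping in $\M\subseteq\M_{\Co,\A_\fin}$ (and its $\si$-finite upgrade): one must be careful that the $\limsup$ of the approximating sets lies in $\si(\A)$ and that the outer-measure estimate $m^*\big(A\cap(E+\limsup_n B_n)\big)=0$ really follows from $\sum_n m^*(A\cap(E+B_n))<\infty$, since $m^*$ is only countably subadditive, not additive, on $\mathcal{P}(\Om)$; the inclusion $E+\limsup_n B_n\subseteq\bigcup_{n\ge k}(E+B_n)$ for every $k$ on the relevant part, combined with $A\cap E$ being approximated from both sides, needs to be written out with some care. Everything else is a routine combination of subadditivity, the triangle inequality for $d_A$, and the exact additivity of $m^*$ on $\A$.
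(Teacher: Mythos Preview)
Your proposal is correct, and several of its pieces match the paper's proof closely: the inclusion $\M_\Co\subseteq\M_{\Co,\A_\fin}$ via $d_A\le d_\Om$ is the paper's Step~1; your $\M_{\Co,\A_\fin}\subseteq\M$ via $\si(\A)\subseteq\M$ and the triangle inequality is Step~2; your $\M\subseteq\M_{\Co,\A_\fin}$ via a Borel--Cantelli bound on $S=\limsup_n B_n$ is Step~3 (the paper uses $\liminf_n B_n$, but either works, and your inclusion $E+\limsup_n B_n\subseteq\bigcup_{n\ge k}(E+B_n)$ is exactly right); your $\M_\Ca\subseteq\M$ direction is essentially Step~6; and your $\si$-finite upgrade is the same assembly argument. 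One small slip: when you write ``the union of the first cover intersected with $A$ yields a set $B\in\A$'', a countable union need not lie in $\A$---you must truncate to a finite subunion first, as the paper does in Step~6.

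The genuine difference is in how you close the loop with $\M_\Ca$. The paper's Step~4 proves $\M\subseteq\M_\Ca$ by invoking Proposition~\ref{prop:tight} (the tightness of $m^*$: every $F$ has a $\si(\A)$-cover $S$ with $\bar m(S)=m^*(F)$), which in turn rests on Theorem~\ref{th:si-add}. You avoid this entirely: you prove $\M\subseteq\M_{\Ca,\A_\fin}$ directly from Property~\eqref{iv} and the exact additivity $m(A\cap B)+m(A\cap B^\c)=m(A)$ for $B\in\A$, and you prove $\M_{\Ca,\A_\fin}\subseteq\M_\Ca$ by the covering argument (split a near-optimal cover of $F$ into $\A_\fin$-pieces and apply the local Carath\'eodory condition to each). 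Your route is more elementary and self-contained---it does not need the $\si$-additivity of $m^*$ on $\M$---while the paper's route is shorter once Proposition~\ref{prop:tight} is in hand and makes the role of the measurable envelope explicit.
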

In \cite[Theorem~2.3]{weizs-meas}, it was shown that the restriction of $m^*$ to $\M_\Co$ is $\si$-additive. 
%
%
The $\si$-finiteness condition in the second sentence of Theorem~\ref{th:M comp} is essential; cf.\ e.g.\  \cite[Example~4.28]{wise-hall}. 

\begin{remark}
The condition $\Om\in\A$ will never be used in the proofs of Theorems~\ref{th:si-alg}--\ref{th:M comp} (to be given in Section~\ref{proofs}). 
So, these theorems 
will hold even if $\A$ is only assumed to be a ring (but not necessarily an algebra) of subsets of $\Om$. 
\end{remark}

\section{Proofs}\label{proofs}
First here, let us recall the definition and basic properties of the outer measure corresponding to the given measure $m$ on the algebra $\A$. 

Take any set $E\subseteq\Om$. Let $c(E)$ denote the set of all sequences $(A_n):=(A_n)_{n=1}^\infty$ in $\A$ such that $\bigcup_n A_n\supseteq E$. Let also $c_\d(E)$ denote the set of all disjoint sequences $(A_n)\in c(E)$, so that $A_i\cap A_j=\emptyset$ whenever $(A_n)\in c_\d(E)$ and $i\ne j$. Consider the outer measure 
\begin{equation}\label{eq:outer}
m^*(E):=\inf\Big\{\sum_n m(A_n)\colon(A_n)\in c(E)\Big\}	
\end{equation}
of the set $E$ corresponding to the measure $m$ on the algebra $\A$. 
The following properties of the outer measure are well known and easy to check: 
for any subsets $E,E_1,E_2,\dots$ of $\Om$, one has 
\begin{description}
	\item[{\bf positivity}] $m^*(E)\ge0$; 
	\item[{\bf monotonicity}] if $E_1\subseteq E_2$ then $m^*(E_1)\le m^*(E_2)$; 
	\item[{\bf subadditivity}] $m^*\big(\bigcup_n E_n\big)\le\sum_n m^*(E_n)$; 
	\item[{\bf ``disjoint'' version}] in the definition \eqref{eq:outer} of the outer measure, one may replace $c(E)$ by $c_\d(E)$. 
\end{description}
The latter property follows immediately by simple and well-known 

\begin{remark}\label{rem:CU}
For any sequence $(A_n)$ in $\A$ and the sequence $(B_n)$ defined by the condition $B_n=A_n\setminus 
\bigcup_{j<n}A_j$, one has the following: 
$(B_n)$ is a disjoint sequence in $\A$, $m(B_n)\le m(A_n)$ for all $n$, 
and 
$
\bigcup_n A_n=\bigcup_n B_n. 	
$
\end{remark}


Another useful property of the outer measure is just a bit more involved: 
\begin{proposition}\label{prop:1}
Take any disjoint sequence $(A_n)$ in $\A$. Then 
\begin{equation*}
	m^*\Big(\bigcup_n A_n\Big)=\sum_n m(A_n). 
\end{equation*}
\end{proposition}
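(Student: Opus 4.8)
The plan is to prove the two inequalities $m^*\big(\bigcup_n A_n\big)\le\sum_n m(A_n)$ and $m^*\big(\bigcup_n A_n\big)\ge\sum_n m(A_n)$ separately. The first is immediate from subadditivity of $m^*$ together with the fact that $m^*(A_n)\le m(A_n)$ for each $A_n\in\A$ (which follows by using the covering sequence $(A_n,\emptyset,\emptyset,\dots)$ in \eqref{eq:outer}); actually since $(A_n)$ itself is a member of $c\big(\bigcup_n A_n\big)$, we get $m^*\big(\bigcup_n A_n\big)\le\sum_n m(A_n)$ directly.

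For the reverse inequality, let $E:=\bigcup_n A_n$ and take any covering sequence $(C_k)\in c(E)$; by the ``disjoint'' version of the outer measure it suffices to consider $(C_k)\in c_\d(E)$, or alternatively one can just work with arbitrary $(C_k)\in c(E)$. The key observation is that for each fixed $n$, the sets $\{A_n\cap C_k\}_k$ are members of $\A$ (since $\A$ is a ring/algebra), they are contained in $A_n$, and their union over $k$ is $A_n\cap\bigcup_k C_k=A_n$ because $(C_k)$ covers $E\supseteq A_n$. Since $m$ is a measure (countably additive) on $\A$ and $A_n\in\A$, countable subadditivity of $m$ on $\A$ gives $m(A_n)\le\sum_k m(A_n\cap C_k)$. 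Summing over $n$ and interchanging the order of summation (legitimate as all terms are nonnegative, so this is Tonelli for double series), we obtain
\begin{equation*}
	\sum_n m(A_n)\le\sum_n\sum_k m(A_n\cap C_k)=\sum_k\sum_n m(A_n\cap C_k).
\end{equation*}
Now for each fixed $k$, the sets $\{A_n\cap C_k\}_n$ are \emph{disjoint} members of $\A$ (disjointness inherited from the $A_n$), all contained in $C_k\in\A$, so countable additivity and monotonicity of $m$ on $\A$ yield $\sum_n m(A_n\cap C_k)=m\big(\bigcup_n(A_n\cap C_k)\big)\le m(C_k)$. Hence $\sum_n m(A_n)\le\sum_k m(C_k)$, and taking the infimum over all $(C_k)\in c(E)$ gives $\sum_n m(A_n)\le m^*(E)$, as desired.

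The only mildly delicate points are the two appeals to countable additivity/subadditivity of $m$ on the ring $\A$ itself (as opposed to $m^*$): the bound $m(A_n)\le\sum_k m(A_n\cap C_k)$ uses that $A_n$ is covered within $\A$ by countably many $\A$-sets, and $\sum_n m(A_n\cap C_k)\le m(C_k)$ uses that finitely- or countably-many disjoint $\A$-subsets of $C_k$ have total measure at most $m(C_k)$. Both follow from $\si$-additivity of $m$ on $\A$ by the standard Remark~\ref{rem:CU} disjointification trick (for the subadditivity direction) and by monotonicity (for the disjoint direction, noting $\bigcup_n(A_n\cap C_k)\subseteq C_k$ is in $\A$ only if it lies in $\A$ — but $m$-$\si$-additivity is only asserted for sequences whose union happens to be in $\A$, which is exactly the situation here since $\bigcup_n(A_n\cap C_k)=E\cap C_k$ may fail to be in $\A$). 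This last subtlety is the main obstacle: one must instead bound $\sum_{n\le N} m(A_n\cap C_k)=m\big(\bigcup_{n\le N}(A_n\cap C_k)\big)\le m(C_k)$ using only \emph{finite} additivity and monotonicity of $m$ on $\A$, then let $N\to\infty$. With that adjustment the argument goes through cleanly.
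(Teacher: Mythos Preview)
Your argument is correct and follows essentially the same route as the paper's proof: cover $E=\bigcup_n A_n$ by a (disjoint) sequence $(B_k)$, use $\si$-additivity of $m$ on $\A$ to write $m(A_n)=\sum_k m(A_n\cap B_k)$, interchange sums, and bound the inner sum by $m(B_k)$ via finite additivity and monotonicity. The only cosmetic difference is that the paper truncates the $n$-sum to $n\le N$ \emph{at the outset} (so that the inner step $\sum_{n\le N} m(A_n\cap B_k)=m(C_N\cap B_k)\le m(B_k)$ is immediate), whereas you first write the full double sum and then catch and repair the issue that $\bigcup_n(A_n\cap C_k)$ need not lie in $\A$ --- the fix you give is exactly the paper's truncation, just introduced later.
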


\begin{proof}
Let $E:=\bigcup_n A_n$. Then trivially $(A_n)\in c_\d(E)\subseteq c(E)$, so that, by \eqref{eq:outer}, $m^*(E)\le\sum_n m(A_n)$. 

To prove the reverse inequality, take any $(B_k)\in c_\d(E)$ and any natural $N$. 
Let $C_N:=\cup_{n=1}^N A_n$, so that $C_N\in\A$. 
Then, by the $\si$-additivity of $m$ on $\A$, 
\begin{multline*}
	\sum_{n\le N} m(A_n)=\sum_{n\le N} \sum_k m(A_n\cap B_k)
	=\sum_k \sum_{n\le N} m(A_n\cap B_k)
	=\sum_k m(C_N\cap B_k) \\ 
	\le\sum_k m(B_k). 
\end{multline*}
Taking now the infimum over all $(B_k)\in c_\d(E)$ and recalling the ``disjoint'' version of the definition of the outer measure, we see that $\sum_{n\le N} m(A_n)\le m^*(E)$. 
Finally, letting $N\to\infty$, we confirm the reverse inequality, $\sum_n m(A_n)\le m^*(E)$, which completes the proof of Proposition~\ref{prop:1}. 
\end{proof}

An immediate and important consequence of Proposition~\ref{prop:1} is that 
\begin{equation}\label{eq:m*=m}
	m^*(A)=m(A) \quad\text{for all}\ A\in\A;  
\end{equation}
that is, $m^*$ is an extension of $m$ from $\A$ to the set of all subsets of $\Om$. 

\medskip

Note the following properties of the functions $d_A$: for any $A\in\A_\fin$ and any subsets $E,B,E_1,E_2,\dots,B_1,B_2,\dots$ of $\Om$, 
\begin{enumerate}[(I)]
	\item \label{i} $d_A$ is a pseudometric; 
	\item \label{ii} $d_A(E,B)=d_A(E^\c,B^\c)=d_A(E^\c,A\setminus B)$; 
	\item \label{iii} $d_A\Big(\bigcup\limits_n E_n,\bigcup\limits_n B_n\Big)\le\sum\limits_n
	 d_A(E_n,B_n)$ and \\ 
	$d_A\Big(\bigcap\limits_n E_n,\bigcap\limits_n B_n\Big)\le\sum\limits_n d_A(E_n,B_n)$;
	\item \label{iv} $m^*(A\cap E)\le m^*(A\cap B)+d_A(E,B)$. 
\end{enumerate}
Property \eqref{i} follows because the outer measure $m^*$ is nonnegative, monotone, and subadditive, whereas $E_1+E_3=E_1+E_2+E_2+E_3\subseteq(E_1+E_2)\cup(E_2+E_3)$ and $E_1+E_2=E_2+E_1$. 
Concerning Property \eqref{ii}, it is enough to note that $E+B=E^\c+B^\c$. 
To check Property \eqref{iii}, note that 
$\bigcup\limits_n E_n+\bigcup\limits_n B_n\subseteq\bigcup\limits_n(E_n+B_n)$ and 
$\bigcap\limits_n E_n+\bigcap\limits_n B_n\subseteq\bigcup\limits_n(E_n+B_n)$, and then use again the monotonicity and subadditivity of $m^*$. 
Finally, Property \eqref{iv} as well follows by the monotonicity and subadditivity of $m^*$, since 
$A\cap E\subseteq(A\cap B)\cup\big(A\cap(E+B)\big)$. 

Now we are ready to present

\begin{proof}[Proof of Theorem~\ref{th:si-alg}] 
Take any $A\in\A_\fin$ and any real $\vp>0$. 

Note first that $X\in\M$, since $d_A(X,A)=0<\vp$. 

Moreover, if $E\in\A$, then $d_A(E,B)=0<\vp$ for $B:=E\in\A$; so, 
$\M\supseteq\A$. 

That $\M$ is closed with respect to the complement easily follows from Property \eqref{ii} of $d_A$, which yields $d_A(E^c,A\setminus B)<\vp$ if $d_A(E,B)<\vp$. 

Also, Property \eqref{iii} of $d_A$ shows that $\M$ is closed with respect to the finite unions. So, $\M$ is an algebra.  

To complete the proof of Theorem~\ref{th:si-alg}, 
it remains to show that $\M$ is closed with respect to the countable unions. 

First here, take any disjoint sequence $(A_n)$ in $\A$. Then for any natural $N$ 
\begin{equation}\label{eq:cap A_n}
	d_A\Big(\bigcup_n A_n,\bigcup_{n\le N} A_n\Big)
	=m^*\Big(\bigcup\limits_{n>N} (A\cap A_n)\Big)=\sum_{n>N} m(A\cap A_n)
\end{equation}
by Proposition~\ref{prop:1}. 
On the other hand, for any natural $L$, 
\begin{equation*}
\sum_{n\le L} m(A\cap A_n)
=m\Big(\bigcup_{n\le L}(A\cap A_n)\Big)
\le m(A)<\infty, 	
\end{equation*}
since $A\in\A_\fin$. Hence, $\sum_n m(A\cap A_n)
\le m(A)<\infty$, which implies that \break 
$\sum_{n>N} m(A\cap A_n)\to0$ as $N\to\infty$. 
So, by \eqref{eq:cap A_n}, $\bigcup_n A_n\in\M$ -- for any disjoint sequence $(A_n)$ in $\A$. 

Moreover, since $\M$ is an algebra, in view of Remark~\ref{rem:CU} it now follows that $\bigcup_n B_n\in\M$ for any, not necessarily disjoint, sequence $(B_n)$ in $\A$. 

Finally, take any $E_1,E_2,\dots$ in $\M$. Then for each $n$ there is some $B_n\in\A$ such that $d_A(E_n,B_n)<\vp/2^n$. By the last paragraph, $\bigcup_n B_n\in\M$, and so, $d_A\big(\bigcup_n B_n,B)<\vp$ for some $B\in\A$. 
By Properties \eqref{i} and \eqref{iii} of $d_A$, 
\begin{equation*}
\begin{aligned}
d_A\Big(\bigcup\limits_n E_n,B\Big)
&\le
d_A\Big(\bigcup\limits_n E_n,\bigcup\limits_n B_n\Big) + d_A\Big(\bigcup\limits_n B_n,B\Big) \\ 
	&\le\sum\limits_n d_A(E_n,B_n)+\vp
	\le\sum\limits_n \vp/2^n+\vp=2\vp. 
\end{aligned}	 
\end{equation*}
This completes the proof of Theorem~\ref{th:si-alg}. 
\end{proof}

\begin{proof}[Proof of Theorem~\ref{th:si-add}]  
In view of the subadditivity property of $m^*$, it is enough to show that $m^*$ is finitely superadditive; that is, for any disjoint $E_1$ and $E_2$ in $\M$, one has 
\begin{equation}\label{eq:super}
	m^*(E_1\cup E_2)\overset{\text{?}}\ge m^*(E_1)+m^*(E_2). 
\end{equation}

Take indeed any such $E_1$ and $E_2$. 
Take also any real $\vp>0$. 
If $m^*(E_1\cup E_2)=\infty$, then inequality \eqref{eq:super} is trivial. So, without loss of generality $m^*(E_1\cup E_2)<\infty$. 
Hence, in view of the ``disjoint'' version of the definition of the outer measure, for some sequence $(A_n)\in c_\d(E_1\cup E_2)$ we have 
\begin{equation}\label{eq:sum m<infty}
\sum_n m(A_n)\le m^*(E_1\cup E_2)+\vp<\infty, 	
\end{equation}
and so, for any natural $N$ and $C:=C_N:=\bigcup_{n\le N}A_n\in\A_\fin$, 
\begin{equation}\label{eq:infty>}
	\infty>m^*(E_1\cup E_2)\ge\sum_n m(A_n)-\vp\ge\sum_{n\le N} m(A_n)-\vp=m(C)-\vp. 
\end{equation}
Further, since $E_1$ and $E_2$ are in $\M$, one can find $B_1$ and $B_2$ in $\A$ such that 
\begin{equation}\label{eq:d<ep}
	d_C(E_\al,B_\al)<\vp; 
\end{equation}
here and in what follows, $\al$ is $1$ or $2$. 
By Property~\ref{iv} of the pseudometrics $d_A$, 
\begin{equation}\label{eq:m*1}
	m^*(C\cap E_\al)\le m^*(C\cap B_\al)+d_C(E_\al,B_\al)\le m^*(C\cap B_\al)+\vp. 
\end{equation}
On the other hand, using first the monotonicity 
of $m^*$ and the condition $(A_n)\in c_\d(E_1\cup E_2)$, and then Proposition~\ref{prop:1} and  \eqref{eq:sum m<infty}, 
we have
\begin{equation}\label{eq:m*2}
	m^*(C^\c\cap E_\al) 
	\le m^*\big(C^\c\cap\bigcup_n A_n\big)
	=m^*\big(\bigcup_{n>N} A_n\big) 
	=\sum_{n>N}m(A_n)<\vp 
\end{equation}
if $N$ is large enough -- which latter will be assumed in the sequel. 
It follows from the subadditivity of $m^*$, \eqref{eq:m*1}, \eqref{eq:m*2}, and \eqref{eq:m*=m} that 
\begin{equation*}
	m^*(E_\al)\le m^*(C\cap E_\al)+m^*(C^c\cap E_\al)
	\le m^*(C\cap B_\al)+2\vp=m(C\cap B_\al)+2\vp. 
\end{equation*}
Therefore, 
\begin{equation}\label{eq:m*+m*<}
\begin{aligned}
	m^*(E_1)+m^*(E_2)-4\vp&\le m(C\cap B_1)+m(C\cap B_2) \\ 
	&= m\big(C\cap (B_1\cup B_2)\big)+m(C\cap B_1\cap B_2) \\ 
	&\le m(C)+d_C(E_1,B_1)+d_C(E_2,B_2)\le m(C)+2\vp; 
\end{aligned}
\end{equation}
the penultimate inequality here holds by Property~\eqref{iii} of the pseudometrics $d_A$, taking also into account that $C\cap (B_1\cup B_2)\subseteq C$ and $C\cap E_1\cap E_2\subseteq E_1\cap E_2=\emptyset$, whereas the last inequality in \eqref{eq:m*+m*<} follows immediately from \eqref{eq:d<ep}. 
Comparing the multi-line display \eqref{eq:m*+m*<} with \eqref{eq:infty>}, we see that 
$m^*(E_1\cup E_2)\ge m^*(E_1)+m^*(E_2)-7\vp$, which concludes the proof of \eqref{eq:super} and thus the proof of Theorem~\ref{th:si-add}.   
\end{proof}

\begin{proof}[Proof of Theorem~\ref{th:uniq}]
Recall that the $\si$-finiteness of the measure $m$ on $\A$ means that there is a disjoint sequence $(D_n)$ in $\A_\fin$ such that  
$\bigcup_n D_n=\Om$. In the rest of this proof, let $(D_n)$ be such a sequence. 

In addition to 
the restriction $\bar m$ of $m^*$ to $\M$, let $\tm$ be another measure that extends $m$ from the algebra $\A$ to the $\si$-algebra $\M$. 
Take any $E\in\M$. 

By the ``disjoint'' version of the definition of the outer measure, for each real $r>m^*(E)$ there is a sequence $(A_n)\in c_\d(E)$ such that $\sum_n m(A_n)<r$. So,
$\tm(E)\le\tm\big(\bigcup_n A_n\big)=\sum_n\tm(A_n)=\sum_n m(A_n)<r$, for any real $r>m^*(E)$. 
Thus, $\tm(E)\le m^*(E)=\bar m(E)$. 

So, for each $n$ one has $\tm(D_n\cap E)\le\bar m(D_n\cap E)\le m(D_n)<\infty$ and \break 
$\tm(D_n\cap E^\c)\le\bar m(D_n\cap E^\c)\le m(D_n)<\infty$. Adding now inequalities \break 
$\tm(D_n\cap E)\le\bar m(D_n\cap E)$ and $\tm(D_n\cap E^\c)\le\bar m(D_n\cap E^\c)$, we get $\tm(D_n)<\bar m(D_n)$ unless $\tm(D_n\cap E)=\bar m(D_n\cap E)$. But $\tm(D_n)<\bar m(D_n)$ contradicts the condition that both $\bar m$ and $\tm$ are extensions of the measure $m$ on $\A$. 

Thus, $\tm(D_n\cap E)=\bar m(D_n\cap E)$ for all $n$, whence 
$\tm(E)=\sum _n\tm(D_n\cap E)=\sum _n\bar m(D_n\cap E)=\bar m(E)$, for all $E\in\M$.  
\end{proof}

The following characterization of the outer measure will be useful in the proof of Theorem~\ref{th:M comp}, and it may also be of independent interest. 

\begin{proposition}\label{prop:tight}
Take any $E\subseteq\Om$. Then 
\begin{equation*}
\begin{aligned}
	m^*(E)&=m^*_\circ(E):=\inf\{\bar m(S)\colon S\in\M,\ S\supseteq E\} \\ 
	&=m^*_{\circ\circ}(E):=\inf\{\bar m(S)\colon S\in\si(\A),\ S\supseteq E\}. 
\end{aligned}	
\end{equation*}
Morever, the second of the two infima is attained, and hence the first one is attained. 
\end{proposition}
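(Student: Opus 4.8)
The plan is to prove the three quantities agree by a chain of inequalities and to extract the attainment statement from the construction. Since $\si(\A)\subseteq\M$ and $\bar m$ agrees on both with the outer measure's restriction, every competitor for $m^*_{\circ\circ}$ is a competitor for $m^*_\circ$, so trivially $m^*_\circ(E)\le m^*_{\circ\circ}(E)$. Next, for any $S\in\M$ with $S\supseteq E$ one has $m^*(E)\le m^*(S)=\bar m(S)$ by monotonicity of $m^*$ and the definition of $\bar m$; taking the infimum over such $S$ gives $m^*(E)\le m^*_\circ(E)$. Thus it remains only to show $m^*_{\circ\circ}(E)\le m^*(E)$, together with attainment in the $\si(\A)$-infimum; these two together close the loop $m^*(E)\le m^*_\circ(E)\le m^*_{\circ\circ}(E)\le m^*(E)$.

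For the remaining inequality, first dispose of the case $m^*(E)=\infty$, where there is nothing to prove (and $\Om\in\si(\A)$ serves as a trivial attaining set, or one may note attainment is vacuous/automatic). So assume $m^*(E)<\infty$. The idea is the standard $G_\delta$-type construction adapted to $\si(\A)$: for each $k\ge1$ pick, by the definition \eqref{eq:outer} of the outer measure, a sequence $(A^{(k)}_n)_n\in c(E)$ with $\sum_n m(A^{(k)}_n)\le m^*(E)+1/k$, and set $S_k:=\bigcup_n A^{(k)}_n\in\si(\A)$; then $S_k\supseteq E$ and, by subadditivity of $m^*$ together with $m^*=\bar m$ on $\si(\A)$ and $m^*(A^{(k)}_n)=m(A^{(k)}_n)$ from \eqref{eq:m*=m}, we get $\bar m(S_k)\le\sum_n m(A^{(k)}_n)\le m^*(E)+1/k$. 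Finally put $S:=\bigcap_k S_k\in\si(\A)$; then $S\supseteq E$, and by monotonicity $\bar m(S)\le\bar m(S_k)\le m^*(E)+1/k$ for every $k$, hence $\bar m(S)\le m^*(E)$. Combined with $m^*(E)\le m^*_{\circ\circ}(E)\le\bar m(S)$ this yields $\bar m(S)=m^*(E)=m^*_{\circ\circ}(E)$, so the $\si(\A)$-infimum is attained at $S$. Since $S\in\si(\A)\subseteq\M$, the same $S$ attains the $\M$-infimum as well.

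There is no serious obstacle here; the only point requiring a little care is making sure $\bar m$ is actually defined on all the sets involved, which is fine because $S_k,S\in\si(\A)\subseteq\M$ by Theorem~\ref{th:si-alg}, and that $\bar m(S_k)\le\sum_n m(A^{(k)}_n)$ uses countable subadditivity of $m^*$ on $\M$ — not $\si$-additivity, so no circularity with Theorem~\ref{th:si-add} beyond what is already established. One should also double-check that $m^*(E)<\infty$ forces each $\bar m(S_k)<\infty$, which it does by the displayed bound, so the subtraction-free argument via monotonicity of $\bar m$ on the decreasing-in-effect family $\{S_k\}$ (or just on $S\subseteq S_k$) is legitimate.
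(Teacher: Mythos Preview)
Your proof is correct and follows essentially the same route as the paper's: the same chain of inequalities $m^*(E)\le m^*_\circ(E)\le m^*_{\circ\circ}(E)$, followed by the same $G_\delta$-type construction $S=\bigcap_k S_k$ with $S_k=\bigcup_n A_n^{(k)}$ to produce an attaining set in $\si(\A)$. The only cosmetic differences are that the paper takes the covers from $c_\d(E)$ and invokes Proposition~\ref{prop:1} to get the equality $\bar m(B^{(k)})=\sum_n m(A_n^{(k)})$, whereas you use arbitrary covers from $c(E)$ and subadditivity to get the (sufficient) inequality; and you treat the case $m^*(E)=\infty$ separately, which the paper leaves implicit.
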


\begin{proof}
That $m^*_{\circ}(E)\le m^*_{\circ\circ}(E)$ follows because $\si(\A)\subseteq\M$. That $m^*(E)\le m^*_{\circ}(E)$ follows because for any $S\in\M$ such that $S\supseteq E$ one has $m^*(E)\le m^*(S)=\bar m(S)$. So, $m^*(E)\le m^*_{\circ}(E)\le m^*_{\circ\circ}(E)$. 

Thus, to complete the proof of Proposition~\ref{prop:tight}, 
it is enough to construct some $S\in\si(\A)$ such that $S\supseteq E$ and $\bar m(S)\le m^*(E)$. Such a construction is easy. Indeed, for each natural $k$ there is a sequence $\big(A_n^{(k)}\big)_{n=1}^\infty\in c_\d(E)$ such that $\si(\A)\ni B^{(k)}:=\bigcup_n A_n^{(k)}\supseteq E$ and 
$\bar m(B^{(k)})=\sum_n m(A_n^{(k)})\le m^*(E)+1/k$. 
Let now $S:=\bigcap_k B^{(k)}$. Then indeed $S\in\si(\A)$, $S\supseteq E$, and $\bar m(S)\le m^*(E)$. 
\end{proof}

\begin{proof}[Proof of Theorem~\ref{th:M comp}] The first sentence of Theorem~\ref{th:M comp} will be verified in the following six steps. 

\textbf{Step 1: verification of $\M_\Co\subseteq\M_{\Co,\A_\fin}$.} 
This follows immediately, because $0\le d_A(E,S)\le d_X(E,S)$ for any subsets $E$ and $A$ of $X$. 

\textbf{Step 2: verification of $\M_{\Co,\A_\fin}\subseteq\M$.} 
Take any $E\in\M_{\Co,\A_\fin}$. Take next any real $\vp>0$ and any $A\in\A_\fin$. Then $d_A(E,S)=0$ for some $S\in\si(\A)$. Take any such $S$. Then, by Theorem~\ref{th:si-alg}, $S\in\M$ and therefore $d_A(S,B)<\vp$ for some $B\in\A$, whence, by the triangle inequality, $d_A(E,B)\le d_A(E,S)+d_A(S,B)<\vp$. So, $E\in\M$. 
Step 2 is complete. 

\textbf{Step 3: verification of $\M_{\Co,\A_\fin}\supseteq\M$.} 
Take any $E\in\M$. Take next any real $\vp>0$ and any $A\in\A_\fin$. Then for each natural $k$ there is some set $B_k\in\A$ such that $d_A(E,B_k)\le\vp/2^k$. 
Let $C:=\bigcup_j C_j$, where $C_j:=\bigcap_{k>j}B_k$. 
Note that $C\in\si(\A)$ and $C=\bigcup_{j>N} C_j$ for any natural $N$, since $C_j\subseteq C_{j+1}$ for all $j$. So, by Property~\ref{iii} of $d_A$, we have 
$d_A(E,C_j)\le\sum_{k>j}d_A(E,B_k)\le\sum_{k>j}\vp/2^k=\vp/2^j$ for all $j$, whence 
$d_A(E,C)\le\sum_{j>N}d_A(E,C_j)\le\vp/2^N$, for any natural $N$. 
Thus, $d_A(E,C)=0$, which means that $E\in\M_{\Co,\A_\fin}$.  
Step 3 is complete. 


\textbf{Step 4: verification of $\M\subseteq\M_\Ca$.} 
Take any $E\in\M$ and then any $F\subseteq\Om$. By Proposition~\ref{prop:tight}, for some $S\in\M$ one has $S\supseteq F$ and $\bar m(S)=m^*(F)$. Then 
$S\cap E\in\M$, $S\cap E^\c\in\M$, $S\cap E\supseteq F\cap E$, and $S\cap E^\c\supseteq F\cap E^\c$, whence 
\begin{equation*}
\begin{aligned}
	m^*(F)=\bar m(S)=\bar m(S\cap E)+\bar m(S\cap E^\c)
	&=m^*(S\cap E)+m^*(S\cap E^\c) \\ 
	&\ge m^*(F\cap E)+m^*(F\cap E^\c), 
\end{aligned}	
\end{equation*}
so that $m^*(F)\ge m^*(F\cap E)+m^*(F\cap E^\c)$. The reverse inequality follows by 
the subadditivity of $m^*$. Thus, $E\in\M_\Ca$, for any $E\in\M$. Step 4 is complete. 

\textbf{Step 5: verification of $\M_\Ca\subseteq\M_{\Ca,\A_\fin}$.} This is trivial. 

\textbf{Step 6: verification of $\M_{\Ca,\A_\fin}\subseteq\M$.}  
Take any $E\in\M_{\Ca,\A_\fin}$ and then any $A\in\A_\fin$ and any real $\vp>0$. We want to show here that $d_A(E,B)\le3\vp$ for some $B\in\A$.  
The conditions $A\in\A_\fin$ and $E\in\M_{\Ca,\A_\fin}$ yield 
\begin{equation}\label{eq:Ca}
	\infty>m(A)=m^*(A)=m^*(A\cap E)+m^*(A\cap E^\c). 
\end{equation}
Next, for some sequences $(S_n)\in c_\d(A\cap E)$ and $(T_n)\in c_\d(A\cap E^\c)$ we have 
\begin{equation}\label{eq:subset M}
\begin{alignedat}{2}
	&\M\ni S:=\bigcup_n S_n\supseteq A\cap E,\quad  &&\bar m(S)=\sum_n m(S_n)\le m^*(A\cap E)+\vp, \\ 
	&\M\ni T:=\bigcup_n T_n\supseteq A\cap E^\c,\quad  &&\bar m(T)=\sum_n m(T_n)\le m^*(A\cap E^\c)+\vp.  
\end{alignedat}
\end{equation}
Moreover, without loss of generality $S\cup T\subseteq A$; otherwise, replace $S_n$ and $T_n$ by $A\cap S_n$ and $A\cap T_n$, respectively. 
Since $S\supseteq A\cap E$ and $T\supseteq A\cap E^\c$, 
it follows that 
\begin{equation}\label{eq:S,T,A}
	S\cup T=A. 
\end{equation}
By the first inequality in \eqref{eq:subset M}, $\sum_n m(S_n)<\infty$,  
because 
$m^*(A\cap E)\le m^*(A)=m(A)<\infty$. So, for some natural $N$ we have $\sum_{n>N} m(S_n)\le\vp$. 
Let now 
$
	B:=\bigcup_{n\le N}S_n. 
$ 
Then $B\in\A$ and 
\begin{equation}\label{eq:d(S,B)}
	d_A(S,B)=m^*\Big(\bigcup_{n>N}S_n\Big)=\sum_{n>N} m(S_n)\le\vp. 
\end{equation}
Since $S\subseteq A$ and $A\cap E^\c\subseteq T$, we have $S\setminus(A\cap E)
\subseteq S\cap T$. Therefore and in view of \eqref{eq:subset M}, \eqref{eq:S,T,A}, and \eqref{eq:Ca}, 
\begin{multline*}
	d_A(E,S)=m^*\big(S\setminus(A\cap E)\big)
	\le m^*(S\cap T)=\bar m(S\cap T) \\
	=\bar m(S)+\bar m(T)-\bar m(A)  
	\le m^*(A\cap E)+\vp+m^*(A\cap E^\c)+\vp-m(A)=2\vp. 
\end{multline*}
This and \eqref{eq:d(S,B)} yield the desired result: 
\begin{equation*}
	d_A(E,B)\le d_A(E,S)+d_A(S,B)\le3\vp.   
\end{equation*}
This completes Step~6 and thus the entire proof of the first sentence of Theorem~\ref{th:M comp}. 


It remains to verify its second sentence. To do this, assume that $m$ is $\si$-finite, so that there is a disjoint sequence $(D_n)$ in $\A_\fin$ such that $\bigcup_n D_n=\Om$. Take now 
any $E\in\M_{\Co,\A_\fin}$. 
Since $E\in\M_{\Co,\A_\fin}$, for each natural $n$ there is some set $S_n\in\si(\A)$ such that $d_{D_n}(E,S_n)=0$. Here one can replace $S_n$ by $D_n\cap S_n$, so that without loss of generality $S_n\subseteq D_n$. Let then $S:=\bigcup_n S_n$, so that $S\in\si(\A)$ and $D_n\cap S=S_n$ for each $n$. Since $m^*$ is $\si$-additive on $\M=\M_{\Co,\A_\fin}$ and $\M\supseteq\si(\A)$, 
it follows that 
$d_X(E,S)=\sum_n d_{D_n}(E,S)=\sum_n d_{D_n}(E,D_n\cap S)=\sum_n d_{D_n}(E,S_n)=0$. 
So, $E\in\M_\Co$, for any $E\in\M_{\Co,\A_\fin}$ -- if $m$ is $\si$-finite.


Theorem~\ref{th:M comp} is now completely proved. 
\end{proof}





\bibliographystyle{abbrv}

\bibliography{P:/mtu_pCloud_02-02-17/bib_files/citations12.13.12}

\def\cprime{$'$} \def\polhk#1{\setbox0=\hbox{#1}{\ooalign{\hidewidth
  \lower1.5ex\hbox{`}\hidewidth\crcr\unhbox0}}}
  \def\polhk#1{\setbox0=\hbox{#1}{\ooalign{\hidewidth
  \lower1.5ex\hbox{`}\hidewidth\crcr\unhbox0}}}
  \def\polhk#1{\setbox0=\hbox{#1}{\ooalign{\hidewidth
  \lower1.5ex\hbox{`}\hidewidth\crcr\unhbox0}}} \def\cprime{$'$}
  \def\polhk#1{\setbox0=\hbox{#1}{\ooalign{\hidewidth
  \lower1.5ex\hbox{`}\hidewidth\crcr\unhbox0}}} \def\cprime{$'$}
  \def\polhk#1{\setbox0=\hbox{#1}{\ooalign{\hidewidth
  \lower1.5ex\hbox{`}\hidewidth\crcr\unhbox0}}} \def\cprime{$'$}
  \def\cprime{$'$}
\begin{thebibliography}{1}

\bibitem{bogachev}
V.~I. Bogachev.
\newblock {\em Measure theory. {V}ol. {I}, {II}}.
\newblock Springer-Verlag, Berlin, 2007.

\bibitem{borovkov}
A.~A. Borovkov.
\newblock {\em Probability theory}.
\newblock Gordon and Breach Science Publishers, Amsterdam, 1998.
\newblock Translated from the 1986 Russian original by O. Borovkova and revised
  by the author.

\bibitem{halmos}
P.~R. Halmos.
\newblock {\em Measure {T}heory}.
\newblock D. Van Nostrand Company, Inc., New York, N. Y., 1950.

\bibitem{weizs-meas}
H.~von Weizs{\"a}cker.
\newblock {\em Basic Measure Theory}.
\newblock Lecture Notes. Technische Universit\"at Kaiserslautern, 2008.
\newblock Revised Translation, \url{http://bit.ly/2jS1aPf}.

\bibitem{wise-hall}
G.~L. Wise and E.~B. Hall.
\newblock {\em Counterexamples in probability and real analysis}.
\newblock The Clarendon Press, Oxford University Press, New York, 1993.

\end{thebibliography}


\end{document}